\documentclass{amsart}
\usepackage[leqno]{amsmath}
\usepackage{amsfonts}
\usepackage{amsthm}
\usepackage{amsmath}
\usepackage{amssymb}
\usepackage{amscd}
\usepackage{inputenc}
\usepackage{t1enc}
\usepackage[mathscr]{eucal}
\usepackage{indentfirst}
\usepackage{graphicx}
\usepackage{graphics}
\numberwithin{equation}{section}
\usepackage{epstopdf} 
\usepackage{multirow}
\usepackage{mathtools}
\usepackage{caption}
\usepackage{setspace}
\usepackage{tikz-cd}
\usepackage{multicol}
\usepackage{comment}
\usepackage[title,titletoc]{appendix}

\usepackage[style=alphabetic, backend=biber, url = false, isbn = false, doi = false]{biblatex}
\usepackage[cache=false]{minted}
 
\newcommand{\bibhack}[1]{}

\theoremstyle{plain}
\newtheorem*{theorem*}{Theorem}
\newtheorem*{cor*}{Corollary}
\newtheorem*{lem*}{Lemma}
\newtheorem{Th}{Theorem}[section]

\newtheorem{lemma}[Th]{Lemma}

\newtheorem{cor}[Th]{Corollary}
\newtheorem{Prop}[Th]{Proposition}

 \theoremstyle{definition}

\newtheorem*{Def*}{Definition}

\newtheorem{?}[Th]{Problem}

\theoremstyle{plain}
\newtheorem{innercustomgeneric}{\customgenericname}
\providecommand{\customgenericname}{}
\newcommand{\newcustomtheorem}[2]{%
  \newenvironment{#1}[1]
  {%
   \renewcommand\customgenericname{#2}%
   \renewcommand\theinnercustomgeneric{##1}%
   \innercustomgeneric
  }
  {\endinnercustomgeneric}
}

\newcustomtheorem{customthm}{Theorem}
\newcustomtheorem{customtheorem}{Theorem}
\newcustomtheorem{customlemma}{Lemma}
\newcustomtheorem{customcor}{Corollary}
\newcustomtheorem{customconj}{Conjecture}

\newcommand{\codim}{\operatorname{codim}}

\newcommand{\diam}{{\rm{diam}}}

\newcommand{\setvert}{\,\vert \,}

\newcommand{\Q}{\mathbb{Q}}
\newcommand{\Z}{\mathbb{Z}}
\newcommand{\R}{\mathbb{R}}

\newcommand{\into}{\xhookrightarrow{}}

\newcommand{\OO}{\mathcal{O}}

\newcommand{\norm}[1]{\left\lVert#1\right\rVert}

\newcommand{\bb}{\mathbb}

\newcommand{\orb}{{\rm orb}}

\newcommand{\SO}{{\rm SO}}

\newcommand{\homeo}{{\rm homeo}}
\newcommand{\diag}{{\rm diag}}

\newcommand{\Id}{{\rm Id}}
\newcommand{\Curv}{{\rm Curv}}

\title{No Connectedness Lemma for Alexandrov spaces.}
\author[Nienhaus]{Jan Nienhaus}
\address{KU Leuven, Celestijnenlaan 200B, Heverlee, Belgium}
\email{\detokenize{jan.nienhaus@kuleuven.be}}
\author[Wulle]{Dennis Wulle}
\address{Universit\"at M\"unster, Einsteinstra\ss e 62, M\"unster, Germany}
\email{\detokenize{dennis.wulle@uni-muenster.de}}
\subjclass[2020]{Primary 53C23; Secondary 53C20, 57R18}
\date{September 2026}

\begin{document}

\begin{abstract}
    Since its introduction in 2003, Wilking's Connectedness Lemma has become a powerful tool in the study of manifolds with positive sectional curvature. In recent work of the authors, this has been extended to orbifolds. The purpose of this note is to exhibit counterexamples in the class of Alexandrov spaces for the Connectedness Lemma in both its ordinary and dual formulation. This answers a 2007 question of Wilking.
\end{abstract}
\maketitle
\section{Introduction}
One of the fundamental underlying questions in global geometry is this: \textit{How does the geometry of a space restrict its topology}? The resulting theorems are interesting because of their necessary local to global nature: the common geometric restrictions, i.e. curvature properties, are local; the topological conclusions are not.

In the context of positive sectional curvature, there has been great progress over the last 25 years \cite{Wil06, gwz, Kennard13, Kennard2021-ge,Nienhaus22, Wulle2023, Kennard2026-rk,NienhausWulle2026t3}, all using the same tool to bring the curvature restriction into the argument: Wilking's Connectedness Lemma \cite{Wil03}:

\begin{theorem*}[Connectedness Lemma \cite{Wil03}]
    Let $M^n$ be a Riemannian manifold with positive sectional curvature and $N^{n-k} \subset M$ a totally geodesic suborbifold. Then the inclusion $\iota \colon N \hookrightarrow M$ is  $(n - 2k+1)$-connected. 
\end{theorem*}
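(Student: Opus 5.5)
We will follow Wilking's Morse-theoretic argument on the space of paths. We assume throughout that $M$ is closed (compact) and that $N$ is a compact embedded totally geodesic submanifold of codimension $k$; the statement uses ``suborbifold'', but for a Riemannian manifold we read it as submanifold. Let $\Omega = \Omega_{N,N}(M)$ be the space of piecewise smooth paths $c\colon[0,1]\to M$ with $c(0),c(1)\in N$, and let $E$ be the energy functional on $\Omega$. The constant paths form a copy of $N$ inside $\Omega$, and this copy is exactly the minimum set $E^{-1}(0)$.

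\textbf{Step 1: reduction to a path-space statement.} Consider the endpoint evaluation
\[
\Omega \to M \times M .
\]
Its homotopy fiber over $(N\times N)$ compares with the homotopy fiber of $N \hookrightarrow M$. By a standard argument (as in Wilking), it then suffices to show that the inclusion $N \hookrightarrow \Omega_{N,N}(M)$ of constant paths is $(n-2k+1)$-connected.

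\textbf{Step 2: index estimate.} By Morse theory for $E$ with its Palais--Smale property, after perturbing slightly or using Bott's degenerate version, the connectivity in Step 1 holds provided every nonconstant critical point has index at least $n-2k+1$. The nonconstant critical points are geodesics $c$ meeting $N$ orthogonally at both ends.

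For such a $c$, let $W$ be the space of parallel fields $X$ along $c$ with $X(0)\in T_{c(0)}N$ and $X(1)\in T_{c(1)}N$. Since $N$ has dimension $n-k$, the parallel translate of $T_{c(0)}N$ lies in $\dot c(1)^\perp$ and intersects $T_{c(1)}N$ in a subspace of dimension at least
\[
(n-k)+(n-k)-(n-1) = n-2k+1 .
\]
For $X\in W$, the second fundamental form of $N$ vanishes because $N$ is totally geodesic, so the boundary terms in the second variation are zero. Since $X' = 0$, the second variation reduces to
\[
-\int_0^1 \sec(\dot c, X)\,|\dot c|^2|X|^2 \,dt ,
\]
which is strictly negative by positive curvature. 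Hence $E''$ is negative definite on $W$, and the index of $c$ is at least $n-2k+1$.

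\textbf{Step 3: conclusion.} Attaching cells of dimension at least $n-2k+1$ does not change homotopy groups below that degree and only adds to $\pi_{n-2k}$ surjectively. This gives the required connectivity of $N\to\Omega$, and Step 1 transfers it to $N\hookrightarrow M$.

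\textbf{Main obstacle.} The delicate part is Step 1: setting up the homotopy-fiber comparison correctly, including the off-by-one bookkeeping. A cleaner alternative is to work with $\Omega_{N,M}$, the paths starting in $N$ and ending anywhere in $M$, evaluated at their endpoint; this gives exactly one degree more. We would use that version instead. It relies on the same parallel-field index count, now with only the initial condition $X(0)\in T_{c(0)}N$ and the endpoint free, and then reads off the $(n-2k+1)$-connectedness via the fibration argument.
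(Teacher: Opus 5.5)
The paper gives no proof of this statement. It quotes it from Wilking \cite{Wil03}, and your outline follows Wilking's argument. Your Step~2 is correct and is the geometric core: parallel fields in $P(T_{c(0)}N)\cap T_{c(1)}N$ give index at least $n-2k+1$ at every nonconstant critical point of $E$ on $\Omega_{N,N}(M)$.

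The gap is in the homotopy bookkeeping, which you flag as delicate but then get wrong. Attaching cells of dimension $\ge \lambda$ makes the inclusion only $(\lambda-1)$-connected. So Step~2 yields that $N\hookrightarrow\Omega_{N,N}(M)$ is $(n-2k)$-connected, not $(n-2k+1)$-connected as Step~3 asserts. Moreover, the $(n-2k+1)$-connectedness your Step~1 demands is false in general. For the equator $S^{n-1}\subset S^n$, via the shift below, it would force $\pi_n(S^n,S^{n-1})=0$, which is false.

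Your fallback does not help either:
\begin{itemize}
\item On $\Omega_{N,M}(M)$ with free endpoint, the natural boundary condition $\dot c(1)=0$ leaves only constant paths as critical points. Also $\Omega_{N,M}(M)$ deformation retracts onto $N$ anyway, so Morse theory there carries no information.
\item If you fix the endpoint instead (the fiber $\Omega_{N,p}(M)$), nonzero parallel fields are no longer admissible variations, so the ``same index count'' is unavailable.
\end{itemize}

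What is missing is the one-degree shift. Evaluation at $t=1$ is a fibration $\Omega_{N,N}(M)\to N$. Its fiber $\Omega_{N,q}(M)$, the paths from $N$ to $q\in N$, is the homotopy fiber of $N\hookrightarrow M$. Restricted to the constant paths, this fibration is the identity of $N$, with point fibers. Comparing these two fibrations over the same base gives
\[
\pi_i(\Omega_{N,N}(M),N)\cong\pi_i(\Omega_{N,q}(M))\cong\pi_{i+1}(M,N).
\]
The index estimate gives $(n-2k)$-connectedness, i.e.\ $\pi_i(\Omega_{N,N}(M),N)=0$ for $i\le n-2k$. Through the shift this becomes $\pi_j(M,N)=0$ for $j\le n-2k+1$, which is exactly the claim. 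Replace your Step~1 with this and correct the count in Step~3, and your argument becomes Wilking's proof.
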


For a map to be $k$-connected means that the induced map on homotopy groups is an isomorphism in degrees $\le k$ and surjective in degree $k$. This puts the conclusion firmly into the topological realm and can quickly lead to strong restrictions on the cohomology, at least in settings with Poincar\'e duality \cite{Wil03}.

The powerful consequences this has led to raise an obvious question: Since we can extend the notion of positive sectional curvature to more general spaces than manifolds, is that assumption to the Connectedness Lemma critical? If yes, is there perhaps a different formulation that does go through, as is often the case when passing to singular spaces? Here there is an obvious second candidate:

\begin{theorem*}[Connectedness Lemma, dual formulation]
    Let $M^n$ be a closed manifold with positive sectional curvature. Let $N^{n-k}$ be a totally geodesic submanifold. Then $H_i(A \setminus N;R) = 0$ for $i \ge 2k-1$ for any coefficient ring $R$.
\end{theorem*}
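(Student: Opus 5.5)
We derive the dual formulation from the ordinary Connectedness Lemma stated above, using Poincaré–Lefschetz duality; the statement's $A$ is read as $M$, and the claim is $H_i(M\setminus N;R)=0$ for $i\ge 2k-1$. By that lemma, the inclusion $N\hookrightarrow M$ is $(n-2k+1)$-connected. The relative Hurewicz theorem then gives $H_j(M,N;\mathbb Z)=0$ for $j\le n-2k+1$. By the universal coefficient theorem, also $H_j(M,N;R)=0$ and $H^j(M,N;R)=0$ for $j\le n-2k+1$. The cohomological vanishing in degree $n-2k+1$ needs $\operatorname{Ext}(H_{n-2k},\cdot)$, and $H_{n-2k}(M,N)$ vanishes as well.

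Next we pass to the complement. Since $N$ is a closed totally geodesic submanifold, it has a closed tubular neighbourhood $D$ whose boundary is the unit normal sphere bundle. Thus $M\setminus N$ deformation retracts onto $W:=M\setminus \operatorname{int}D$, a compact manifold with boundary $\partial W=\partial D$. Assume first that $M$ is orientable. Excision gives $H^j(M,N;R)\cong H^j(M,D;R)\cong H^j(W,\partial W;R)$. Lefschetz duality for the compact manifold $W$ then gives
\[
H_i(M\setminus N;R)\cong H_i(W;R)\cong H^{n-i}(W,\partial W;R)\cong H^{n-i}(M,N;R).
\]
This vanishes when $n-i\le n-2k+1$, that is, when $i\ge 2k-1$, which is the claim.

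If $M$ is not orientable, we use $R$-coefficients twisted by the orientation character $w$ in the duality step. Wilking's proof of the Connectedness Lemma is a Morse-theoretic argument on the path space $\Omega_{N,N}M$. It shows that $M$ is obtained from $N$, up to homotopy, by attaching cells of dimension at least $n-2k+2$. Hence $H^j(M,N;\mathcal L)=0$ for $j\le n-2k+1$ for every local coefficient system $\mathcal L$, in particular for the twisted system $R^w$. The same duality chain then applies with twisted coefficients on the cohomology side.

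The main obstacle is this bookkeeping: correctly handling local coefficients and torsion (via $\operatorname{Ext}$) so that the vanishing holds for an arbitrary coefficient ring $R$, and not just for fields. Phrasing the connectivity as a statement about relative cells avoids both problems at once.
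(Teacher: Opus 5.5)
Your argument is correct. In the orientable case it is the paper's proof (Connectedness Lemma, then excision and Poincar\'e--Lefschetz duality), with the Hurewicz, universal-coefficient and tubular-neighbourhood details written out. In the non-orientable case you take a genuinely different route.

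The paper stays with untwisted coefficients. It first gets the result for $\Z_2$, hence for the $\Z_2$-vector spaces $R[2]$ and $R/2R$. The short exact sequences $0\to R[2]\to R\to 2R\to 0$ and $0\to 2R\to R\to R/2R\to 0$ then show that multiplication by $2$ is injective on $H_i(M\setminus N;R)$ for $i\ge 2k-1$. Finally it applies the Connectedness Lemma a second time, to the orientation double cover $\hat M\supset\hat N$, and uses the transfer ($p_*\tau=2$) to show that multiplication by $2$ is zero.

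You instead use twisted Lefschetz duality $H_i(W;R)\cong H^{n-i}(W,\partial W;R^w)$. You combine it with the fact that an $(n-2k+1)$-connected pair has vanishing relative cohomology in degrees $\le n-2k+1$ for every local system.

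\begin{itemize}
\item Your route is shorter and purely topological: it uses only the connectivity of $N\hookrightarrow M$, not the curvature of the double cover. The cost is working with local coefficients.
\item The paper's route avoids local systems. Its pass-to-the-orientation-cover pattern is also what it uses for orbifolds with rational coefficients. There the relative cell argument on the underlying spaces is unavailable, since only orbifold connectivity is known.
\end{itemize}

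One correction to your justification. Wilking's Morse theory attaches cells of dimension $\ge n-2k+1$ to $N$ inside the path space $\Omega_{N,N}M$; it does not directly give a relative cell structure on $(M,N)$. The statement you need, that $(M,N)$ is homotopy equivalent rel $N$ to $N$ with cells of dimension $\ge n-2k+2$ attached, does follow from the $(n-2k+1)$-connectivity of the pair by standard CW approximation (see Hatcher, \S4.1). So nothing beyond the statement of the lemma is required.
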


For manifolds, this follows from the usual formulation as a direct consequence of Poincar\'e duality and excision, and is equivalent if everything is simply connected. Where Poincar\'e duality is unavailable, it is not clear which version is the more likely to extend.

There are two canonical candidate classes when dealing with positively curved singular spaces: Orbifolds, which are allowed only very mild singularities, and Alexandrov spaces, which may be regarded as the most general class currently considered where positive curvature still makes sense.

It turns out that for orbifolds, a version of the Connectedness Lemma does hold \cite{NienhausWulle2026t3}:

\begin{lemma}
    Let $\OO^n$ be a Riemannian orbifold with positive sectional curvature and $N^{n-k} \subset\OO$ a totally geodesic suborbifold. Then the inclusion $\iota \colon N \hookrightarrow \OO$ is orbifold-$(n - 2k+1)$-connected. 
\end{lemma}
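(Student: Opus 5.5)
We follow Wilking's original Morse-theoretic argument, transplanted to the orbifold setting. Orbifold homotopy groups are the homotopy groups of the Borel construction $\OO_{\rm orb} = \widetilde{\OO}\times_{O(n)} EO(n)$, where $\widetilde{\OO}$ is the orthonormal frame bundle, a manifold with an almost free $O(n)$-action. So the statement is equivalent to the ordinary topological claim that $N_{\rm orb}\to\OO_{\rm orb}$ is $(n-2k+1)$-connected. We establish this through the space of paths. Let $\Omega = \Omega_{N,N}(\OO)$ be the space of orbifold paths starting and ending in $N$. Concretely, take $H^1$-paths in $\OO$ with lifts to the frame bundle modulo the local groups, i.e.\ paths in the Borel construction, so that $\Omega$ is modelled by a Hilbert manifold with an almost free proper action. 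Consider the energy functional $E$ on $\Omega$. Its minimum set is the constant paths, identified with $N_{\rm orb}$. The fibration $\Omega \to N_{\rm orb}\times N_{\rm orb}$ (endpoint map) and the standard comparison with $\Omega \simeq$ homotopy fiber of the diagonal reduce the claim to showing that the pair $(\Omega, N_{\rm orb})$ is $(n-2k+1)$-connected. Here one uses the usual argument that $\pi_i(\OO_{\rm orb},N_{\rm orb})\cong\pi_{i-1}(\Omega_{N,N},N)$ up to a shift, exactly as in Wilking's proof.

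\textbf{Step 1: index estimate.} For every nonconstant orbifold geodesic $c$ perpendicular to $N$ at both ends, we show that the index of $E$ at $c$ is at least $n-2k+1$. The local computation is identical to the manifold case. Parallel transport $P$ along $c$ (defined in orbifold charts, and well defined up to the local group actions, which is harmless) maps $T_{c(0)}N$, of dimension $n-k$, into $T_{c(1)}\OO$. The orthogonal complement of $\dot c$ in $T_{c(1)}\OO$ has dimension $n-1$, and $T_{c(1)}N$ is a codimension-$k$ subspace of it. So $P(T_{c(0)}N)\cap T_{c(1)}N$ has dimension at least $n-2k+1$. For each parallel field $V$ in this intersection, the second variation is $-\int\langle R(V,\dot c)\dot c,V\rangle<0$, because the boundary terms vanish since $N$ is totally geodesic. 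Positive curvature makes this strictly negative. These fields are orbifold variations because parallel transport commutes with the local group actions.

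\textbf{Step 2: equivariant Morse theory.} We apply Morse theory for a $C^2$ functional on a Hilbert manifold with an almost free compact group action. More simply, we work $O(n)$-equivariantly on the frame-bundle-lifted path space, which is an honest Hilbert manifold with a proper almost free action, where $E$ satisfies Palais--Smale. Critical sets may be degenerate, so we use the standard perturbation or the fact that the lower bound on the index holds at every critical point; then passing each critical level attaches cells of dimension $\ge n-2k+1$ (Bott-type argument, finite stabilizers not affecting rational or, via Borel construction, integral connectivity). It follows that $\Omega$ is obtained from $N_{\rm orb}$ up to homotopy by attaching cells of dimension $\ge n-2k+1$, giving the connectivity.

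\textbf{Main obstacle.} The main obstacle is Step 2. We must set up the path space so that it is a genuine Hilbert manifold whose homotopy type computes the orbifold homotopy of the pair, and check that $E$ has the needed compactness and that the index bound translates into cell attachments despite degenerate and non-isolated critical orbits with finite isotropy. We would handle this by passing to the $O(n)$-equivariant picture on $\widetilde{\OO}$. There, geodesics of $\OO$ become horizontal geodesics. We then use the fact that horizontal index forms compute the orbifold index, and apply equivariant Morse theory (Wasserman) with the Borel construction. Finite isotropy only affects the normal representations, not the connectivity count.
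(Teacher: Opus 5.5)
The paper gives no proof of this lemma; it is quoted from \cite{NienhausWulle2026t3}, so your outline has to stand on its own. Running Wilking's Morse theory on the frame bundle is the natural route. As written, however, your argument never uses that $N$ is a \emph{suborbifold} rather than merely a totally geodesic subset, which is exactly the distinction the paper stresses. Your Step~1 goes through (branchwise) for the totally convex subset $N_m\subset A_m$ of Section~\ref{sec:counter}, and there the conclusion is false. For $m=3$ the preimage $\tilde N\subset\bb S^8$ of $N_3$ is the union of the three great $5$-spheres $\{x_i=0\}$. These meet pairwise in $2$-spheres and have empty triple intersection, so $\pi_1(\tilde N)\cong\Z$. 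Consequently the Borel construction $\tilde N\times_{G_3}EG_3$ of $\widetilde{\OO}|_{N_3}$ does not even map injectively on $\pi_1$ to $BA_3\simeq\bb S^8\times_{G_3}EG_3$, whereas the lemma's conclusion would assert $3$-connectedness. So the hypothesis must enter in Step~2, precisely in the two things you assert without proof: that $\Omega$ is a Hilbert manifold, and that its minimum set is $N_{\rm orb}$. That is the ``main obstacle'' you name but do not resolve.

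\textbf{How to close the gap.} Give $F=\widetilde{\OO}$ the connection metric, so that $F\to\OO$ is chartwise covered by the Riemannian submersion $Fr(\tilde U)\to\tilde U$, and set $F_N=F|_N$.
\begin{enumerate}
\item Because $N$ is a suborbifold, its preimage in each chart is a single $\Gamma$-invariant submanifold, so $F_N$ is a closed submanifold of $F$. For $N_m$ it is instead a union of crossing branches.
\item Hence $\Omega_{F_N,F_N}(F)$ is an honest Hilbert manifold, $E$ satisfies Palais--Smale when $\OO$ is compact, and the minimum set is $F_N$.
\item $F_N\times_{O(n)}EO(n)\simeq BN$, provided $N$ carries the local groups induced from $\OO$. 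With the effective structure this fails: for the mirror $N=\bb S^{n-1}$ of $\bb S^n/\Z_2$ ($\Z_2$ acting by a reflection), $BN\simeq\bb S^{n-1}$ misses $\pi_1(B\OO)=\Z_2$.
\item The positive-energy critical points are horizontal lifts $\tilde c$ of orbifold geodesics perpendicular to $N$. To transfer your index bound, lift the \emph{curves} $\exp(sV)$ horizontally, not the fields $V$. The lifted endpoints stay in $F_N$ because $F_N$ is saturated by fibres, and energy is preserved. Lifting fields does not work: by O'Neill, $I_F(\tilde V,\tilde V)=I(V,V)+4\int|A_{\dot{\tilde c}}\tilde V|^2$, which need not be negative.
\item Ordinary, non-equivariant Morse theory then makes $(\Omega_{F_N,F_N}(F),F_N)$ $(n-2k)$-connected. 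This is weaker than the $(n-2k+1)$ you claim, but it is all the shift by one needs.
\item So $F_N\hookrightarrow F$ is $(n-2k+1)$-connected, and this passes to $BN\to B\OO$ because both Borel constructions fibre over $BO(n)$ with these fibres. No Wasserman-type equivariant theory or isotropy bookkeeping is needed.
\end{enumerate}

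\textbf{A minor slip in Step~1.} $T_{c(1)}N$ has codimension $k-1$, not $k$, in $\dot c(1)^\perp$, and you must also note that your transported space $P(T_{c(0)}N)$ lies in $\dot c(1)^\perp$. With codimension $k$, your count would give only $n-2k$.
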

Here slight care has to be taken with the conclusion. It is not true that the induced map on the underlying spaces has to be highly connected. Instead, $f$ is said to be orbifold-$(n-2k+1)$-connected if the induced map on classifying spaces $Bf:BN\into B\OO$ is $(n-2k+1)$-connected. See \cite{NienhausWulle2026t3} for details.

The purpose of this note is 
\begin{theorem*}
    For both formulations, there are Alexandrov space counterexamples $N\subset A$ with $Curv\ge1$ in the sense of Alexandrov. One may even arrange that $N$ is a totally convex (not only totally geodesic) subspace of $A$. The counterexamples work for any choice of coefficient ring.
\end{theorem*}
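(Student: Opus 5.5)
The plan is to build the counterexamples as quotients of round spheres by isometric group actions, and to take $N$ as the image of a totally geodesic subsphere fixed by a subgroup. Quotients of $S^n$ with $\Curv\ge 1$ by compact groups of isometries are Alexandrov spaces with $\Curv\ge1$. Images of totally geodesic subspheres lying in fixed point sets are totally geodesic. With more care, images of convex sets are totally convex; for instance, a fixed point component of a normalizing isometry in the quotient is convex.

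The natural first candidate is an orbifold quotient $\OO=S^n/\Gamma$ with $\Gamma$ finite, viewed as an Alexandrov space. Lemma 1.1 only gives orbifold connectedness, and the remark after it says the underlying map need not be highly connected. So we look for a finite group $\Gamma\subset O(n+1)$ and a subsphere $S^{n-k}$ invariant under $\Gamma$ with the following properties:
\begin{itemize}
\item the underlying spaces $|S^{n-k}/\Gamma|\subset|S^n/\Gamma|$ fail to be $(n-2k+1)$-connected;
\item the complement $|S^n/\Gamma|\setminus|S^{n-k}/\Gamma|$ has nonvanishing homology in some degree $\ge 2k-1$.
\end{itemize}

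Concretely, I would take $k$ small, say $k=2$, and let $N$ be the fixed set $S^{n-2}$ of a rotation group acting on a normal $\R^2$. The complement of $S^{n-2}$ in $S^n$ deformation retracts onto the orthogonal circle $S^1$. So the complement in the quotient is homotopy equivalent to a quotient of $S^1\ast(\text{link})$. The group $\Gamma$ acts on the $\R^{n-1}$ factor so that $S^{n-2}/\Gamma$ is, for example, a suspension or a real projective space, while the normal circle factor is collapsed, for instance by a reflection, into an interval.

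Then $A\setminus N\simeq$ (a cone-like piece) $\times$ (interval), and this can be arranged to carry the homology of $S^{n-2}/\Gamma$. Choosing $\Gamma$ acting on $S^{n-2}$ with a quotient having top homology in degree $n-2\ge 3=2k-1$ violates the dual formulation. To make it work over every coefficient ring, the nonvanishing homology should be free, for example $\Z$ in top degree. We can also take products of spherical joins so that a suspension of a nonsimply connected space appears: $A=S^1\ast X$, with $X$ an Alexandrov space of $\Curv\ge1$ such as a spherical space form quotient, and $N=X$. Here $N=X\subset S^1\ast X$ has codimension $2$, and its complement is homotopy equivalent to $S^1\times X$ after deleting the circle part, whose homology in degree $\dim X=n-2\ge 2k-1$ is nonzero.

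For the ordinary formulation, the same join picture should work. Take $A=[0,\pi/2]\ast X$ or $S^1/\Z_2\ast X$, which is a spherical join and hence has $\Curv\ge1$. Inclusions of the form $X\hookrightarrow$ (cone over $X$) are far from connected when $X$ has nontrivial $\pi_1$ or homology. Convexity of $X$ in a spherical join with an interval of length $\le\pi/2$ follows from the join metric formula. The main obstacle is simultaneously keeping the codimension $k$ small relative to $n$, so that the range $n-2k+1$ is large, while making the relevant homotopy or homology of $X$ sit inside that range, and ensuring total convexity rather than just total geodesy. I would try $X=S^{m}/\Gamma$ with $\Gamma$ free, so that $\pi_1(X)\neq 0$. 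Joined with a short interval this gives $k=1$, and the cone is contractible, so $\pi_1$ fails to inject. For the dual version I would use the circle join with $k=2$.
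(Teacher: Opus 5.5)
\textbf{Ordinary formulation.} Your join idea is in the spirit of the paper's suspension example, where $X\subset\Sigma X=S^0\ast X$ with $X$ boundary-free and not a homotopy sphere. The bookkeeping needs correcting, though. Joining with an interval raises the dimension by two, so $X\subset I\ast X$ has codimension $k=2$, not $1$. Also, $I\ast X$ has boundary, and that boundary contains $X$. Your boundary-free variant $X\subset S^1\ast X$ with $X=\R P^3$ does work. Here $n=5$ and $k=2$, so the inclusion would have to be $2$-connected, but $\pi_1(S^1\ast X)=0\neq\pi_1(X)$.

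\textbf{Dual formulation: the genuine gap.} None of your constructions can give a counterexample, because in each of them $A\setminus N$ retracts onto a space of dimension $k-1$.

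First, suppose $q^{-1}(N)$ is a single $\Gamma$-invariant great subsphere $\bb S(W)$, as in your plan. Then $\Gamma$ also preserves $W^\perp$. The homotopy $(w,v)\mapsto (sw,v)/\norm{(sw,v)}$, $s\in[0,1]$, retracts $\bb S^n\setminus\bb S(W)$ onto $\bb S(W^\perp)$ and is $O(W)\times O(W^\perp)$-equivariant. Hence $A\setminus N\simeq\bb S^{k-1}/\Gamma$, whose homology vanishes in all degrees $\ge k$, in particular in degrees $\ge 2k-1$. For your $k=2$ plan this is a circle or an interval. Your claim that the complement ``carries the homology of $\bb S^{n-2}/\Gamma$'' has it backwards: the complement sees the normal sphere, not $N$.

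Joins fail the same way. In $Y\ast X$, the complement of $X$ retracts onto $Y$ by pushing the join parameter to the $Y$-end. So $(S^1\ast X)\setminus X\simeq S^1$. The product $S^1\times X\times(0,\pi/2)$ appears only if you also delete $S^1$, which is not part of $N$.

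\textbf{The missing idea.} $q^{-1}(N)$ must be a \emph{union} of several subspheres permuted by the group, so that the complement upstairs has high-degree homology. The paper takes $G_m=\Gamma^m\rtimes S_m$ acting on $(\R^3)^m$, with $\Gamma\subset\SO(3)$ the tetrahedral group. It sets $N_m$ to be the image of $\bigcup_i\{x_i=0\}$, which has codimension $3$. Then $A_m\setminus N_m$ retracts onto $(\bb S^2/\Gamma)^m/S_m\cong\operatorname{Sym}^m(\bb C\bb P^1)\cong\bb C\bb P^m$. Its homology is free, equal to $R$ in every even degree up to $2m$. So for $m\ge 3$ it is nonzero in degree $6>2k-1=5$, for every coefficient ring $R$.

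\textbf{Total convexity.} This also needs a real argument, which your proposal does not supply. The paper first shows $\diam(A_m)<\pi/2$, using $\sum_{\gamma\in\Gamma}\gamma v=0$. It then uses a block-swapping argument to show that minimizing lifts of two points of $N_m$ share a zero block.
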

Given such an example, the spherical joins $X\star N\subset X\star A$ give new counterexamples for any Alexandrov space $X$, giving counterexamples in many dimensions and with arbitrarily bad local geometry.

That the usual formulation fails has been known for over 20 years; the main new result is that the dual formulation fails too. This answers a question raised by Wilking in 2007\footnote{At the 2007 AIM Workshop "On Manifolds with nonnegative sectional curvature"}.

The paper is structured as follows: In a preliminary section we will establish the dual formulation for manifolds with arbitrary coefficients and orbifolds with rational coefficients. While this was known, there is no proof in the literature, so we include it. Then in Section \ref{sec:counter} we exhibit the counterexamples. For the standard version one can simply take suspensions; this has been known and is included only for the sake of completeness. The counterexamples for the dual formulation are given by quotients of spheres by finite groups, with the main difficulty being to recognize the topology of the complement. The subspace involved is totally convex, ruling this out as a saving additional assumption.

It should be noted that although both the ambient space and subspace in the example are orbifolds, this is not a counterexample to the orbifold Connectedness Lemma of \cite{NienhausWulle2026t3}. Part of the subtleties with orbifolds is that a totally geodesic metric subset of an orbifold is not necessarily a suborbifold. In this case the subset $N$ does \textit{not} define a suborbifold of $M$ in the sense of \cite{NienhausWulle2026t3}. For more on this see the discussion there.

\subsection*{Acknowledgements}
The authors would like to thank Burkhard Wilking for bringing the problem to their attention.
The first named author would like to thank the Cluster of Mathematics M\"unster for enabling a research stay in M\"unster in the summer of 2026, where part of this work was carried out.

The first named author is supported by the FWO and FNRS under EOS project G0I2222N.
The second named author acknowledges support by the Alexander von Humboldt Foundation through Gustav Holzegel’s Alexander von Humboldt Professorship endowed by the Federal Ministry of Education and Research and by the Deutsche Forschungsgemeinschaft (DFG, German Research Foundation) under Germany's Excellence Strategy EXC 2044/2 –390685587, Mathematics Münster: Dynamics–Geometry–Structure.

\subsection*{AI disclosure}
The explicit counterexamples were first found by OpenAI's Chat\-GPT-6-Astra with minimal human input. 

The structuring of the exposition, including all statements and proofs as appearing in this article, was done by the authors. This document contains no machine-generated text.

\section{Preliminaries}\label{prelim}

We start with the proof of the dual statement to the Connectedness Lemma. The statement is known to the experts, but there is no proof in the literature yet.

\begin{cor}\mbox{}
    \begin{enumerate}
        \item Let $M^n$ be a compact Riemannian manifold with positive sectional curvature and $N^{n-k} \subset M$ a totally geodesic submanifold. Then $H_i(M\setminus N; R) = 0$ for $i \ge 2k - 1$.
        \item Let $\OO^n$ be a compact Riemannian orbifold with positive sectional curvature and $N^{n-k} \subset \OO$ a totally geodesic suborbifold. Then $H_i(\vert \OO \vert \setminus \vert N \vert ;\Q) = 0$ for $i \ge 2k -1$
    \end{enumerate}
\end{cor}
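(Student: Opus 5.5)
The plan is to combine the Connectedness Lemma (respectively its orbifold version, the Lemma above) with Lefschetz duality for the pair consisting of the ambient space and the subspace. The degree bookkeeping is as follows. If the pair $(M,N)$ is $(n-2k+1)$-connected, then $H^j(M,N;\mathcal L)=0$ for all $j\le n-2k+1$ and every local coefficient system $\mathcal L$. Duality turns $H_i(M\setminus N)$ into $H^{n-i}(M,N)$, and the condition $i\ge 2k-1$ is exactly $n-i\le n-2k+1$.

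\emph{Manifold case.} By the Connectedness Lemma, $\iota\colon N\hookrightarrow M$ is $(n-2k+1)$-connected.
\begin{enumerate}
\item Since $N$ is a closed submanifold, $(M,N)$ is homotopy equivalent to a relative CW complex. By cellular approximation and the long exact sequence of homotopy groups, we may choose it with no relative cells in dimensions $\le n-2k+1$.
\item It follows that $H^j(M,N;\mathcal L)=0$ for $j\le n-2k+1$, for any local system $\mathcal L$ of $R$-modules. In particular this holds for the orientation local system $R^{w}$, twisted by $w_1(M)$.
\item Since $M$ is compact and $N$ is a closed ENR, Lefschetz--Poincar\'e duality with twisted coefficients gives $H_i(M\setminus N;R)\cong H^{n-i}(M,N;R^{w})$. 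This vanishes for $n-i\le n-2k+1$, i.e.\ for $i\ge 2k-1$.
\end{enumerate}
For $i>n$ the vanishing is automatic, since $M\setminus N$ is an $n$-manifold. Passing to local coefficients handles non-orientable $M$ without any orientation-cover argument.

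\emph{Orbifold case.} The Lemma above gives that $B\iota\colon BN\to B\mathcal O$ is $(n-2k+1)$-connected. Exactly as in the manifold case, $H^j(B\mathcal O,BN;\mathcal L)=0$ for $j\le n-2k+1$ and any local system $\mathcal L$. The remaining steps are:
\begin{enumerate}
\item \emph{Transfer to underlying spaces.} Consider the natural maps $B\mathcal O\to\vert\mathcal O\vert$ and $BN\to\vert N\vert$, which are compatible with the inclusions. Over a point with local group $\Gamma$, the fiber is $B\Gamma$ for a finite group $\Gamma$, hence rationally acyclic. Applying the Leray spectral sequence (or Vietoris--Begle) to any rational local system pulled back from $\vert\mathcal O\vert$ shows these maps are rational cohomology isomorphisms. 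The five lemma then gives $H^j(\vert\mathcal O\vert,\vert N\vert;\mathcal L_{\Q})\cong H^j(B\mathcal O,BN;\mathcal L_{\Q})=0$ in the same range.
\item \emph{Duality.} The underlying space $\vert\mathcal O\vert$ is a compact rational homology manifold, since locally it is $\R^n/\Gamma$ with $\Gamma$ finite. Hence Lefschetz duality holds with rational coefficients twisted by its orientation sheaf $\Q^{w}$, giving
\[
H_i(\vert\mathcal O\vert\setminus\vert N\vert;\Q)\cong H^{n-i}(\vert\mathcal O\vert,\vert N\vert;\Q^{w}),
\]
which vanishes for $i\ge 2k-1$ by the previous step.
\end{enumerate}

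The step I expect to be the main obstacle is the transfer in the orbifold case. One must check that the comparison $B\mathcal O\to\vert\mathcal O\vert$ is a rational isomorphism for the twisted coefficients $\Q^{w}$ and not merely for constant ones, and that this holds compatibly for the pair. I would handle it by noting that $\Q^{w}$ is a locally constant sheaf on $\vert\mathcal O\vert$, so its pullback to $B\mathcal O$ is a local system. The Leray spectral sequence of $B\mathcal O\to\vert\mathcal O\vert$ then collapses, because the stalks of the higher direct images are $H^{>0}(B\Gamma;\Q)=0$. Naturality for $BN\to\vert N\vert$ gives the relative statement.
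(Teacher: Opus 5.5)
Your manifold argument is correct, and it is shorter than the paper's. The paper uses untwisted Poincar\'e--Lefschetz duality only for orientable $M$. For non-orientable $M$ it first settles $R=\Z_2$, then uses the coefficient sequences for $R[2]=\{a\in R \setvert 2a=0\}$ and $R/2R$ to show that multiplication by $2$ is injective on $H_i(M\setminus N;R)$ for $i\ge 2k-1$. Finally it uses the transfer from the orientation double cover to show that this multiplication is also zero. You instead observe that an $(n-2k+1)$-connected pair has vanishing relative cohomology in degrees $\le n-2k+1$ for \emph{every} local coefficient system. Combined with duality twisted by $R^{w}$, this handles all rings and the non-orientable case at once.

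The orbifold case has a genuine gap, but not where you expected it. Your comparison of $B\OO$ with $\vert\OO\vert$ is fine; it is the same rational identification $H^\ast_\orb(\,\cdot\,;\Q)\cong H^\ast(\vert\cdot\vert;\Q)$ that the paper uses. What fails is the duality step. It is false that $\vert\OO\vert$ is a rational homology manifold just because it is locally $\R^n/\Gamma$ with $\Gamma$ finite. By transfer, $H_n(\R^n/\Gamma,\R^n/\Gamma\setminus 0;\Q)\cong H_n(\R^n,\R^n\setminus 0;\Q)^\Gamma$, and $\Gamma$ acts on the latter by the determinant. So this group vanishes whenever $\Gamma\not\subset\SO(n)$, for instance at mirror points or when $-\Id\in\Gamma$ in odd dimension. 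Such points are rationally boundary points, and no locally constant $\Q^{w}$ on $\vert\OO\vert$ gives Lefschetz duality.

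For instance, take the hemisphere $\OO=\mathbb{S}^n/\Z_2$ (quotient by a reflection), so $\vert\OO\vert=D^n$, and let $N$ be an interior point. Then $H_0(D^n\setminus N;\Q)=\Q$, while $H^n(D^n,N;\Q)=0$. So your argument as written only covers locally orientable orbifolds.

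The paper avoids this by passing to the orientation double cover $\hat\OO$, with $\OO=\hat\OO/\langle\tau\rangle$:
\begin{enumerate}
\item The orbifold Connectedness Lemma applies to the preimage $\hat N\subset\hat\OO$.
\item Since $\vert\hat\OO\vert$ is an orientable rational homology manifold, untwisted duality gives $H_i(\vert\hat\OO\vert\setminus\vert\hat N\vert;\Q)=0$ for $i\ge 2k-1$.
\item Since $\vert\OO\vert\setminus\vert N\vert$ is the quotient of $\vert\hat\OO\vert\setminus\vert\hat N\vert$ by $\tau$, one gets $H_i(\vert\OO\vert\setminus\vert N\vert;\Q)\cong H_i(\vert\hat\OO\vert\setminus\vert\hat N\vert;\Q)^{\tau}$, which therefore vanishes.
\end{enumerate}

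If you want to keep a twisted-coefficient proof, the right coefficient system is the orbifold orientation system on $B\OO$ determined by $w_1(\OO)$. This system is in general not pulled back from $\vert\OO\vert$. Its rational pushforward to $\vert\OO\vert$ is the local homology sheaf, which is zero at non-locally-orientable points. You would then have to prove duality with that sheaf, which your proposal does not do.
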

    \begin{proof}\mbox{}
   We start with the first statement. By the Connectedness Lemma, we have $H^i(M,N;R) = 0$ for $i = 0, \ldots, n-2k+1$. 
        
        We first assume that $M$ is orientable. In this case it follows from Poincar\'e Lefschetz duality and excision that $H^{n-i}(M,N;R) = H_i(M\setminus N;R)$. Hence, $H_i(M\setminus N;R) = 0$ for $i \ge 2k -1$. 

        Now let $M$ be non-orientable. For $R = \Z_2$, the statement follows as before. Now consider a general coefficient ring $R$. Define the $\Z_2$-vector spaces $$R[2] = \lbrace a \in R \setvert 2a = 0 \rbrace \quad  R_2 = R/2R. $$
        Since the statement holds true for $R= \Z_2$, we also have 
        $$H_i (M \setminus N; R[2]) = H_i(M\setminus N; R_2) = 0 \text{ for } i \ge 2k - 1.$$
        The short exact sequences 
        \begin{align*}
            0 \to R[2] \to R \to 2R \to 0\\
            0 \to 2R \to R \to R_2 \to 0
        \end{align*}
        give long exact sequences for the homology groups 
        \begin{align*}
             \to H_i(M\setminus N; R[2]) \to  H_i(M\setminus N; R) \to  H_i(M\setminus N; 2R) \to H_{i-1}(M\setminus N; R[2]) \to 
             \end{align*}
             and
             \begin{align*}
              \to H_i(M\setminus N; 2R) \to  H_i(M\setminus N; R) \to  H_i(M\setminus N; R_2) \to H_{i-1}(M\setminus N; 2R) \to   
        \end{align*}
        For $i \ge 2k-1$, this shows that $$H_i(M\setminus N; R) \to H_i(M\setminus N; 2R) \to  H_i(M\setminus N; R) $$
        is injective. Since this composition coincides with multiplication by $2$, multiplication by $2$ is injective.
        
        We claim that multiplication by $2$ coincides with the zero map: Denote by $\hat M$ its orientation cover, which also has positive sectional curvature with the pull-back metric. Denote by $\hat N$ the lift of $N$ to $\hat M$ and by $p\colon \hat M \to M$ the quotient map. $\hat N$ is still totally geodesic and therefore fulfills the assumptions of the previous case and hence $H_i(\hat M \setminus \hat N;R) = 0$. Denote by $\tau \colon H_i(M\setminus N;R) \to H_i(\hat M \setminus \hat N;R)$ the transfer homomorphism (see \cite[section 3.G]{Hatcher2001-zo}). We have $p_\ast \tau = 2 \Id$.  Since $ H_i(\hat M \setminus \hat N;R)  = 0$ for $i \ge 2k-1$, this implies $2H_i(M\setminus N;R) =0$ for $i \ge 2k -1$. Since multiplication by $2$ is injective for $i\ge 2k-1$, this implies that $H_i(M \setminus N;R) = 0$ for $i \ge 2k -1$.

        Now consider an orbifold $\OO$ with suborbifold $N$. Let $\hat \OO$ be its orientation cover and $\tau \colon \OO \to \OO$, such that $\OO = \hat \OO/\langle \tau \rangle$. The Connectedness Lemma implies:
        $$H_\orb^i(\hat \OO, \hat N;R) = 0 \text{ for any } 0 \le i \le n-2k +1$$
        In rational coefficients we have $H_\orb^i (\hat \OO, \hat N;\Q) = H^i(\vert \hat \OO\vert,  \vert \hat N \vert ;\Q)$. Since orientable orbifolds are rational Poincar\'e duality spaces, we have again $H_i(\vert \hat \OO \vert \setminus \vert \hat N\vert; \Q) = 0$ for $i \ge 2k -1$.
    By \cite[Theorem~III.2.4]{bredon} we have $$H_i(\vert \OO \vert \setminus \vert N\vert;\Q) = H_i(\vert \hat \OO \vert \setminus \vert \hat N\vert;\Q)^\tau = 0 \text{ for } i \ge 2k-1,$$
     
    \end{proof}

\section{Counterexamples} \label{sec:counter}
\subsection{The standard version}
Let $X^n$ be an Alexandrov space with $\Curv(X) \ge 1$ and empty boundary. Then so is the suspension $\Sigma X$, with the suspension metric. Furthermore, suspensions satisfy $\pi_i(\Sigma X) = \pi_{i-1} (X)$ for $i\ge1$. Furthermore $X \hookrightarrow\Sigma X$ is totally geodesic and of codimension $1$. If the Connectedness Lemma was true, this inclusion would be $n$-connected. Therefore $\pi_i(X) \cong \pi_{i-1}(X)$ for $i = 1, \ldots, n-1$, so $\pi_{n-1}(X)=\ldots=\pi_0(X)=0$ if $X$ is connected. This only holds if $X$ is a homotopy sphere.
\subsection{The dual version}
We give explicit counterexamples to the dual formulation of the Connectedness Lemma for Alexandrov spaces for any coefficient ring:\\
Let $S_m$ be the permutation group of $m$ elements and $\Gamma \subset \SO(3)$  the tetrahedral group of order $12$, i.e. $$\Gamma = \langle D, P\rangle \text{ with } D = \diag(1, -1, -1) \text{ and } P(a,b,c) = (b,c,a) $$
Then $G_m \coloneqq \Gamma ^m \rtimes S_m$ acts on $V_m = (\R^3)^m$ by 
$$(\gamma_1, \ldots, \gamma_m; \sigma) \ast (x_1,\ldots, x_m) = (\gamma_1 x_{\sigma^{-1}(1)}, \ldots , \gamma_m x_{\sigma^{-1}(m)}).$$
We set $A_m = \bb S(V_m)/G_m = \bb S^{3m-1}/G_m$ with the quotient metric of the round sphere and denote the quotient map by $q$. Thus $A_m$ is the underlying space of a positively curved closed orbifold without reflection points and hence a compact positively curved Alexandrov space with empty boundary of dimension $3m -1$.

We set $N_m = \lbrace [x_1, \ldots, x_m] \setvert x_i = 0 \text{ for some } i\rbrace$. There is an isometry $A_{m-1} \to N_m$ induced by $(\R^3)^{m-1} \to (\R^3)^m, (x_1, \ldots, x_{m-1}) \mapsto (x_1, \ldots, x_{m-1},0)$. Hence $\dim N_m = 3m-4$ and $\codim(N_m\subset A_m) = 3$.

We claim that  $N_m$ is totally convex: First we show, that for each nonzero  $u,v \in \R^3$ there is a $\gamma \in \Gamma$ with $\langle u,\gamma v\rangle >0$. Indeed we have $\sum_\gamma \gamma v=0$, hence, if all terms would be non positive, they must all vanish, contradicting $u \neq 0$, since $\Gamma v$ span $\R^3$.  For $x,y \in \bb S(V_m)$, we have $\cos(d_{A_m}(q(x),q(y))) = \max_{g \in G_m} \langle x,gy\rangle >0$ by the previous discussion. Therefore $\diam(A_m) < \pi/2$. Now, let $x,y \in q ^{-1}(N_m)$ with $d_{\bb S(V_m)}(x,y) = d_{A_m}(q(x),q(y))$. Hence $\langle x, y\rangle = \max_{g\in G_m} \langle x,gy\rangle$. Then they must have a common $0$-block: Indeed, if not, then there are $i \neq j$ with $$x_i =0, \, y_i \neq 0, \, x_j \neq 0, \, y_j = 0$$
Using an element of $S_m$, we can rotate the $j$-th and the $i$-th block. Additionally, we choose $\gamma \in \Gamma$, such that $\langle x_j, \gamma y_i\rangle$ is maximal and positive. Denote the new element in $\bb S(V_m)$ by $y'$. By construction: $$\langle x,y'\rangle - \langle x,y \rangle = \langle x_j, \gamma y_i\rangle >0 $$
A contradiction, since $y$ was already picked optimal. Hence for some $i$ we have $x_i = y_i$ and the shortest geodesic between them stays in this block. Hence $N_m$ is totally convex.

We compute the homotopy type of $A_m \setminus N_m$: The map 
$$(t,[x_1, \ldots,x_n]) \mapsto \left[\sqrt{(1-t)\norm{x_1}^2 +\frac tm } \cdot \frac{x_1}{\norm{x_1}}, \ldots, \sqrt{(1-t)\norm{x_m}^2 +\frac tm } \cdot \frac{x_m}{\norm{x_m}} \right]$$
defines a strong deformation retract onto $$M_m := \lbrace [x_1, \ldots, x_m]\setvert \norm{x_i} = 1/\sqrt{m} \text{ for all } i = 0, \ldots m \rbrace \cong (\bb S^2/\Gamma)^m/S_m.$$ Since $\vert \bb S^2/\Gamma\vert \cong_{\homeo}\bb S^2 = \bb C \bb P^1$, we have $M_m \cong_\homeo (\bb C \bb P^1)^m/S_m $.\\
We claim $M_m \cong \bb C \bb P^m$: Let $[[a_1:b_1], \ldots, [a_m,b_m]] \in M_m$. This defines a  homogeneous polynomial $$\prod_{j = 1}^m(b_j X -a_j Y)$$ up an overall scalar. Its coefficients define a point in $\bb C \bb P^m$. Conversely, factorizing a homogeneous polynomial in two variables over $\bb C$ recovers an unordered collection of points in $\bb C \bb P^1$ through the linear factors.  Therefore $$H_\ast(A_m \setminus N_m; R) = H_\ast (\bb C \bb P^m; R) = \begin{cases}
    R, i = 0,2, \ldots, 2m\\
    0, \text{otherwise}
\end{cases}$$ 
Hence $H_i(A\setminus N; R) = R$ for all even $i \ge 6 = 2\codim(N_m \subset A_m)$.\\

We can now construct an even larger class of counterexamples. For this we need the following lemma. It must be known to the experts, but the authors could not find a proof of it in the literature.
\begin{lemma}\label{lem:convexity}
    Let $X$, $Y$ be compact Alexandrov spaces with $\Curv(X) \ge 1$ and $N \subset X$ totally $\pi$-convex. Then $N \ast Y \subset X \ast Y$ is totally $\pi$-convex and hence locally convex and totally geodesic.
\end{lemma}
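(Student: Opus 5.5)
We prove the lemma by working directly with the join distance formula. Recall that points of $X \ast Y$ are written $[x,y,t]$ with $t\in[0,\pi/2]$, and
$$\cos d\bigl([x_1,y_1,t_1],[x_2,y_2,t_2]\bigr) = \cos t_1\cos t_2 \cos d_X(x_1,x_2) + \sin t_1 \sin t_2 \cos d_Y(y_1,y_2),$$
where $d_X,d_Y$ are truncated at $\pi$. Total $\pi$-convexity of $N$ means that every geodesic in $X$ of length $<\pi$ with endpoints in $N$ lies in $N$. We must show the same for $N\ast Y$ in $X\ast Y$.

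\emph{Step 1: project a geodesic onto its factors.} Let $c$ be a geodesic in $X\ast Y$ of length $<\pi$ with endpoints $p_1=[x_1,y_1,t_1]$ and $p_2=[x_2,y_2,t_2]$ in $N\ast Y$. We may assume $x_i\in N$ whenever $t_i<\pi/2$. Identify $X\ast Y$ locally with the Euclidean cone picture: $C(X\ast Y)=C(X)\times C(Y)$. Then $c$ lifts, after radial reparametrization, to a straight segment in $C(X)\times C(Y)$ between $(\cos t_1\, x_1, \sin t_1\, y_1)$ and $(\cos t_2\, x_2,\sin t_2\, y_2)$, which stays away from the cone vertex because the length is $<\pi$. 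Its $C(X)$-component is a straight segment in $C(X)$ that avoids the vertex, unless it is degenerate.

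\emph{Step 2: case analysis.} Suppose first that both $t_i<\pi/2$. If $d_X(x_1,x_2)<\pi$, then the segment in $C(X)$ projects to the minimal geodesic $x_1x_2$ in $X$, traversed monotonically. This geodesic lies in $N$ by total $\pi$-convexity, so every point of $c$ has $X$-coordinate in $N$ or has $t=\pi/2$, and hence lies in $N\ast Y$. If $d_X(x_1,x_2)=\pi$, then the $C(X)$-segment passes through the vertex. There the $X$-coordinate is undefined, and the corresponding points of $c$ lie in $Y\subset N\ast Y$. On each half, the $X$-coordinate is constant, equal to $x_1$ or $x_2$, so those points are also in $N\ast Y$. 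If some $t_i=\pi/2$, say $p_1\in Y$, then the $C(X)$-component is a radial segment from the vertex. So $c$ has constant $X$-coordinate $x_2\in N$ (or lies in $Y$), and again $c\subset N\ast Y$.

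\emph{Step 3: the consequences.} Local convexity follows, since a totally $\pi$-convex set contains all short geodesics between its points. Total geodesicity follows as well, since geodesics of $N\ast Y$ are then geodesics of $X\ast Y$; one should note that $N\ast Y$ is closed and carries the induced length metric, which coincides with its own join metric.

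The main obstacle is Step 1: justifying rigorously that a geodesic of the spherical join corresponds to a straight segment in the product of Euclidean cones, and that its $C(X)$-part projects to a geodesic of $X$. I would cite the standard fact that $C(X\ast Y)\cong C(X)\times C(Y)$ isometrically, together with the fact that geodesics in a Euclidean cone not through the vertex project to geodesics of length $<\pi$ in the base. Some care is also needed to treat non-unique geodesics: every geodesic, not only a chosen one, must be handled. The product decomposition handles this, since geodesics in products are exactly pairs of geodesics parametrized proportionally.
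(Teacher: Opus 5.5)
Your proof is correct and takes the paper's route: lift the geodesic to $C(X\ast Y)\cong C(X)\times C(Y)$ via the cone--geodesic correspondence, split it into factor components, and check that the $C(X)$-component stays in $C(N)$; your Step 2 just spells out, including the through-the-vertex case, the paper's one-line claim that total $\pi$-convexity of $N$ makes $C(N)\subset C(X)$ totally convex. The only slip is in Step 1, where you say the $C(X)$-component avoids the vertex unless it is degenerate; as your own Step 2 shows, it passes through the vertex when $d_X(x_1,x_2)=\pi$, which is harmless because those points lie in $Y\subset N\ast Y$.
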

\begin{proof}
    By \cite[Proposition I.5.10]{Haefliger}, for any Alexandrov space $A$ with $\Curv(A) \ge 1$, the geodesic segments  of length smaller than $\pi$ joining $a_1,a_2 \in A$ are in one to one correspondence with geodesic segments in $C(A)$ joining $a_1,a_2 \in A \subset C(A)$. Let $c$ be a geodesic in $X \ast Y$ joining two points of $N\ast Y$ of length smaller than $\pi$. Then there exists a corresponding geodesic $\tilde c$ in $C(X \ast Y) \cong C(X) \times C(Y)$ (\cite[I.5.15.]{Haefliger}), joining two points of $C(N \ast Y) = C(N) \times C(Y)$. Since $N\subset X$ is $\pi$-convex, $C(N) \subset C(X)$ is totally convex. Hence $ C(N \ast Y) \subset C(X \ast Y)$ is totally convex and we have that $\tilde c$ lies in $C(N \ast Y)$. Again by \cite[Proposition I.5.10]{Haefliger} $c$ must entirely lie in $N\ast Y$.
\end{proof}
\begin{Prop}
    Let $X$ be an Alexandrov space with $\Curv(X) \ge 1$. Then also $N_m \ast X \subseteq A_m \ast X$ is a counterexample to the dual formulation of the Connectedness Lemma for any coefficient ring.
\end{Prop}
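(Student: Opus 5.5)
We need to check three things: $A_m\ast X$ is an Alexandrov space with $\Curv\ge 1$; $N_m\ast X$ is totally geodesic of codimension $3$; and the homology of the complement is nonzero in some degree $\ge 2\cdot 3-1=5$.

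\emph{Curvature and convexity.} The spherical join of two Alexandrov spaces with $\Curv\ge1$ again has $\Curv\ge 1$, since its Euclidean cone is $C(A_m)\times C(X)$. The join has dimension $\dim A_m+\dim X+1$, and $N_m\ast X$ has dimension $\dim N_m+\dim X+1$, so the codimension is still $3$. Above we showed that $N_m$ is totally convex in $A_m$ and that $\diam(A_m)<\pi/2$. Hence every geodesic of length $<\pi$ between points of $N_m$ stays in $N_m$, i.e.\ $N_m$ is totally $\pi$-convex. Lemma~\ref{lem:convexity} then gives that $N_m\ast X\subset A_m\ast X$ is totally $\pi$-convex, hence totally geodesic.

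\emph{Topology of the complement.} A point of $A_m\ast X$ is a triple $[a,x,t]$ with $t\in[0,\pi/2]$, where $t=0$ gives $a\in A_m$ and $t=\pi/2$ gives $x\in X$. The complement of $N_m\ast X$ consists of the points with $t<\pi/2$ and $a\notin N_m$; all of $X$ (the locus $t=\pi/2$) lies in $N_m\ast X$. So
\[
(A_m\ast X)\setminus(N_m\ast X)\;\cong\;(A_m\setminus N_m)\times C^\circ(X),
\]
where $C^\circ(X)=X\times[0,\pi/2)/X\times\{0\}$ is the open cone, and this cone is contractible. Alternatively, the straight-line homotopy $t\mapsto(1-s)t$ retracts the complement onto the copy $t=0$ of $A_m\setminus N_m$; it stays in the complement because it never reaches $t=\pi/2$. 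Either way, the complement is homotopy equivalent to $A_m\setminus N_m$. By the computation above, $A_m\setminus N_m$ is homotopy equivalent to $M_m\cong\bb C\bb P^m$. Hence $H_{2j}\bigl((A_m\ast X)\setminus(N_m\ast X);R\bigr)=R\neq0$ for $2j=6,\dots,2m$, and $6\ge 2\cdot 3-1$. For $m\ge3$ this contradicts the dual formulation for every coefficient ring $R$.

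\emph{Main obstacle.} The step needing the most care is the identification of the complement. One must check that the join topology near $t=\pi/2$ matches the open-cone description, i.e.\ that removing $N_m\ast X$ removes exactly the points with $t=\pi/2$ or $a\in N_m$, and that the retraction is continuous there. Both follow from the definition of the join as a quotient of $A_m\times X\times[0,\pi/2]$. A smaller point is that Lemma~\ref{lem:convexity} needs $\pi$-convexity rather than mere total convexity; this is where $\diam(A_m)<\pi/2$ is used.
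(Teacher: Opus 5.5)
Your proposal is correct and follows the paper's proof: Lemma~\ref{lem:convexity} gives total $\pi$-convexity of $N_m\ast X$, the complement deformation retracts onto $A_m\setminus N_m\simeq\bb C\bb P^m$, and the codimension stays $3$, so $H_6\neq 0$ contradicts the dual formulation. Your explicit description of the complement as $(A_m\setminus N_m)\times C^\circ(X)$ supplies the detail behind the paper's ``it is easy to see''. One small correction: you do not need $\diam(A_m)<\pi/2$, because total $\pi$-convexity only constrains geodesics of length $<\pi$ and is therefore the weaker condition, directly implied by the total convexity of $N_m$.
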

\begin{proof}
    Since $N_m \subset A_m$ is totally convex, we have by Lemma \ref{lem:convexity}, that $N_m \ast X \subset A_m \ast Y$ is totally $\pi$-convex and hence totally geodesic and locally convex. Furthermore, it is easy to see, that $A_m \ast X \setminus N_m \ast X \simeq A_m \setminus N_m \simeq \bb C\bb P^m $. Hence the result follows as before. 
\end{proof}
\printbibliography
\end{document}